\documentclass{amsart}
%[12pt,a4paper]
%{article}
\usepackage{lipsum}
\usepackage[dvips]{graphicx}
\usepackage{amssymb,latexsym}
\usepackage[ansinew]{inputenc}
\usepackage{mathrsfs}
\usepackage{psfrag}
\usepackage[all]{xy}
\usepackage{color}

\usepackage{eufrak}
\usepackage{amsfonts}
\usepackage{float}
\usepackage{amsmath,amssymb}
\usepackage{lscape}

\newtheorem{proposition}{Proposition}[section]
\newtheorem{theorem}{Theorem}

\newtheorem{remark}{Remark}[section]

\newcommand{\A}{\mathcal{A}}

\newcommand{\R}{\mathbb{R}}

\def\qed{\hfill $\Box$}
\def\proof{\noindent {\sl Proof} :\;  }

\def\b0{\mbox{\boldmath $0$}}

\begin{document}

\title{PROJECTIVE CLASSIFICATION OF JETS OF SURFACES IN $\mathbb{P}^4$}
\author{JORGE LUIZ DEOLINDO SILVA AND YUTARO KABATA}

\maketitle
%\textbf{Adviser:} Toru Ohmoto

\begin{abstract}We are interested in the local extrinsic geometry of smooth surfaces in 4-space, 
and classify  jets of Monge forms by projective transformations according to $\A^3$-types of their central projections.
\end{abstract}

\section{Introduction}
We are concerned with the geometry of the contact of smooth surfaces in the projective space $\mathbb{R}P^4$ with their tangent lines.
%local generic extrinsic geometry of smooth surfaces in the projective space $\mathbb{R}P^4$.
The contact is measured by types of map germs of {\it central projections} of the surfaces.
%Here $\A$ means the product of the sets of diffeormorphism germs of the source and the target which naturally acts on the space of smooth map germs,
%and $\A^\ell$ is the space of $\ell$-jets of elements in $\A$ which acts on the $\ell$-jet space of map germs.
%the equivalence of map germs through the actions of diffeomorphism-germs of the source and the target 
%and $\A^\ell$
In the present paper we classify jets of generic surfaces by projective transformations which preserve the geometry of the contacts of surfaces with their tangent lines.

For surfaces in $\mathbb{R}P^3$, Platonova \cite{Platonova, Platonova2} completed the classification
of generic surfaces by projective transformations. The classification of generic two parameter families of surfaces
is done in \cite{DeolindoKabataOhmotoSano} (see also \cite{Arnoldbooklet,Goryunov, Kabata, Landis}).
The study of surfaces in $\R P^4$ was proposed in \cite{Arnoldency} with relation to the classification of singularities 
which appear in central projections of generic surfaces in $\R^4$ by D. Mond \cite{mond1, mond, mond2}. 
However there have been no results about the classification of surfaces so far.
This paper gives an answer to the proposal in \cite{Arnoldency} with the complete classification list of jets of generic surfaces in $\R P^4$  by projective transformations.

%$\A$-classification list of smooth map germs $\R^2,0\to\R^3,0$ of corank one by D. Mond.

On the other hand, the study of geometric aspects of surfaces in $4$-space is a relatively new subject and has a lot of analogy to the study of surfaces in $3$-space which have been investigated in \cite{bruce-nogueira, r4surf, little, dmm,projsrfR4}.

Let $M$ be a smooth surface embedded in $\mathbb{R}^4 \subset \mathbb{R} P^4$ containing the origin of $\R^4$
 where $\mathbb{R}^4$ is identified with the open subset $\{ [x;y;z;w;1]\}\subset \mathbb{R} P^4$. 
 We write  $(z,w)=f(x,y)=(f_1(x,y),f_2(x,y))$ as the Monge form of $M$ at the origin where $f_i(0,0)=df_i(0,0)=0$ for $i=1,2$. 
Two jets of surfaces at some points are said to be \emph{projective equivalent}
if there is a projective transformation on $\mathbb{R}P^4$ sending one to the other.
Our result is the following.
\begin{theorem}\label{mainthm} 
%Consider a smooth surface $M$ in $\mathbb{R}P^4$, and 
There is an open everywhere dense subset $\mathcal{O}$ of the space of compact smooth surfaces $M$ in $\mathbb{R}P^4$ 
such that the germ at each point on $M$ in $\mathcal{O}$ is projectively equivalent to a germ with the $k$-jet of the Monge form of one of the cases in Table  
\ref{bigstrata1}.
\end{theorem}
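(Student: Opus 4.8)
The plan is to reduce the statement to a classification of orbits of $k$-jets of Monge forms under the group of projective transformations fixing the base point, and then to invoke a transversality argument to show that generic surfaces only meet the orbits appearing in Table~\ref{bigstrata1}.

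First I would set up the relevant group action. Fix a point $p\in M$; after a projective change of coordinates we may take $p$ to be the origin of $\R^4\subset\R P^4$ and $M$ in Monge form $(z,w)=f(x,y)$ with $j^1f(0)=0$. Let $G\subset PGL(5,\R)$ be the isotropy subgroup preserving the origin together with the flag ``point $\subset$ tangent plane''. In the affine chart $G$ acts on surface germs by linear-fractional substitutions, and expanding these shows that the induced action on the space $J^k$ of $k$-jets of Monge forms is algebraic: its linear part is the natural action coming from $GL(T_p\R^4)$, while the genuinely projective directions contribute extra unipotent terms which allow the elimination of certain Taylor coefficients. Classifying jets of surfaces up to projective equivalence is then the same as classifying $G$-orbits in $J^k$.

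Next I would bring in central projections as the organising invariant, in analogy with Platonova's treatment of surfaces in $\R P^3$ \cite{Platonova,Platonova2} and its two-parameter refinement \cite{DeolindoKabataOhmotoSano}. For a tangent direction $v\in T_pM$, projection of $M$ from a point on the line $\R v$ gives a map germ $\pi_v\colon(\R^2,0)\to(\R^3,0)$, and $G$ acts so that the $\A$-class of $\pi_v$, computed to a suitable finite order, is a projective invariant of $j^kf$. By Mond's classification of germs $(\R^2,0)\to(\R^3,0)$ \cite{mond1,mond,mond2}, only finitely many $\A$-types occur, of $\A_e$-codimension bounded by the number of available parameters---the two surface parameters together with the $\Proj^1$ of tangent directions at $p$. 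This simultaneously produces the column of $\A^3$-types in Table~\ref{bigstrata1} and restricts which jets must be examined. Then, working stratum by stratum in increasing order of codimension of the contact singularity, I would use the $G$-action to put $j^kf$ into a normal form realising each admissible $\A^3$-type, taking $k$ large enough that the $\A$-class of $\pi_v$ is determined by $j^kf$; where the unipotent projective part fails to remove a coefficient, that coefficient, suitably normalised, is recorded in the table as a projective modulus. This is the bulk of the work and is a finite, if lengthy, computation.

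Finally, to produce the set $\Ost$, I would consider the map assigning to a surface $M$, a point $p\in M$ and a tangent direction at $p$ the jet of $\pi_v$, and apply the transversality theorem for families of mappings (in the style of Montaldi): for $M$ in an open dense subset of the space of compact smooth surfaces this map is transverse to the $\A$-orbit stratification of the relevant jet space. A dimension count then shows that only the strata listed in Table~\ref{bigstrata1} are met, and the first two steps translate this back into the asserted list of Monge normal forms. The principal obstacle lies in the normal-form step: one must control the large unipotent part of $G$ on high-order jets precisely enough to decide, in each stratum, which coefficients are removable and which are genuine projective moduli, and then confirm that projective equivalence is no finer than what the bounded list of $\A^3$-types together with these moduli records, so that the table is both complete and irredundant. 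By comparison the transversality argument is routine once the stratification is available.
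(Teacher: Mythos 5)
Your proposal follows essentially the same route as the paper: stratify the jet space of Monge forms by the $\A^3$-types of central projections from points on asymptotic lines using Mond's classification, obtain genericity (codimension $\leq 2$) by a transversality argument, and then compute normal forms stratum by stratum under the $16$-dimensional isotropy subgroup $G(5)\subset PGL(5)$ fixing the point and tangent plane, recording non-removable coefficients as projective moduli. The only minor difference is that the paper phrases genericity via an extension of Bruce's theorem for the Monge--Taylor map $\Theta\colon M\to V_\ell\times V_\ell$ being transverse to the $GL(2)\times GL(2)$-invariant strata, rather than Montaldi-style transversality for the family of projections, which amounts to the same bookkeeping.
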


{%\footnotesize
\begin{table}[h]
\begin{tabular}{c|c|c|c|c}
\hline
 Type &  Normal form & Condition &  cod & Proj.\\
%\hline
%  $(x^2,y^2)$      & $-$ & $2$ & $0$ \\
%  $(xy,x^2-y^2)$   & $-$ & $2$ & $0$ \\
%  $(x^2,xy)$       & $-$ & $2$ & $1$ \\
%  $(x^2\pm y^2,0)$ & $-$ & $2$ & $2$ \\
  \hline
  \hline
  $\Pi_{E}$        &  $(x^2-y^2+y^2(\phi_1+\phi_2),xy+\psi_4)$ & $$          & $0$ & $-$   \\
  $\Pi_{S}$        &  $(x^2+y^3+ y\phi_3,y^2+\alpha x^3 + x\psi_3)$ & $\alpha\neq0$         & $0$ &$S$   \\
  %\hline
  $\Pi_{B}$     &$(x^2+y^3+ y\phi_3,y^2 + x\psi_3)$  & $-$                & $1$ & $S, B$ \\
  %\hline
  $\Pi_{2B}$   & $(x^2+ y\phi_3,y^2 + x\psi_3)$        & $-$                  & $2$ & $B$\\
 % \hline
 $\Pi_{H}$      &      $(x^2+\beta xy^2+y^3+y\phi_3,xy+x\psi_3)$              & $-$               & $1$  & $H$\\
 %                  \hline
 $\Pi_{P}$      &   $(x^2+xy^2+\lambda y^4,xy+\gamma y^3+\psi_4)$ & $\gamma,\Lambda\neq0$ & $2$ &$P$\\
%\hline
$\Pi^+_{I} $ & $(x^2+ y^2+k_1 x^2y+y\phi_3, \psi_3+ \psi_4)$ & $b_{30}-b_{12}\neq0$ &$2$ & $S,B$ \\
$\Pi^-_{I} $ & $(xy+k_2 x^3+\phi_4, \psi_3+ \psi_4)$ & $b_{30}, b_{03}\neq0, a_{22}=0$   &$2$ & $S,B,H$\\
\hline
  \end{tabular}
\caption {{Strata of codimension $ \leq 2$ in the space of
$4$-jets of Monge forms corresponding to $\A^3$-types of central projections on asymptotic lines.  
Here $\phi_s=\sum_{i+j=s}a_{ij}x^iy^j, \psi_s=\sum_{i+j=s}b_{ij}x^iy^j$,
$\alpha, \beta, \gamma, \lambda,  k_1, k_2, a_{ij}, b_{ij}\in\R$ are moduli parameters and $\Lambda=6\gamma^2+4\lambda-15\gamma+5$. Surface germs of $\Pi_E$-type do not have asymptotic lines.}
%and $\bar{\xi}_4$ is a homogeneous polynomial of degree $4$ without the term $x^2y^2$
% The last column describes the types of central projections of each surface (see Section 2 especially for $\dagger$ and $\dagger\dagger$).
%Refer to Section 4 about singularities of $\dagger$ and $\dagger\dagger$.
}
\label{bigstrata1}
\end{table}

Observe that the last column of Table \ref{bigstrata1} means the types of central projections of corresponding surfaces from view points on asymptotic lines. %
In section 2, we briefly explain the stratification of the space of jets of Monge forms 
induced by the stratification of jets of germs of central projections,
and review the stratification of the $3$-jet space of Monge forms induced from the $\A^3$-stratification of central projection germs
which was originally done in Ph. D thesis of Mond \cite{mond1}.
In Section 3 we obtain simple normal forms of jets of Monge forms that represent each stratum given in the previous section 
by projective transformations, and give the proof of Theorem \ref{mainthm}.

\begin{remark}
Our normal forms in Table \ref{bigstrata1} contain a lot of moduli parameters including coefficients of
higher order terms of degree grater than $4$. 
They must be interpreted as some projective differential invariants.
For example, when we look at the $\A$-types of the central projection of the $\Pi_S$-type surfaces germs,
it is observed that  the central projection from a view point on the asymptotic line is
$\A$-equivalent to $P_3(c): (x,xy^2+cy^4,xy+y^3)$ where $c$ is a moduli parameter, and $\frac{\lambda}{\gamma}=c$. The first author \cite{Deolindo} found also that $\gamma$ and $\lambda$
are expressed by combinations of some cross-ratio invariants and they determines the topological type of BDE (binary differential equations) of asymptotic curves.
\end{remark}

\
%\begin{center}
%\includegraphics[width=5cm, clip]{CurvasCentralProjection22.pdf}
%\end{center}

\textit{Acknowledgements:} We would like to thank Takashi Nishimura and Farid Tari for organizing the JSPS-CAPES no.002/14 bilateral project in 2014-2016.  The authors are supported by the project for their stays in ICMC-USP and Hokkaido University, respectively. The first author thanks also the FAPESP no.2012/ 00066-9 to support part of this work. We are also very grateful to Farid Tari and Toru Ohmoto for their supervisions.

\section{The stratification of the $3$-jet space of Monge forms}
Mond, in Chapter III of his Ph.D thesis \cite{mond1}, stratified the jet space of Monge forms according to $\A$-types of germs of central projections.
In this section we explain the way to stratify the jet space of Monge forms according to types of central projections, 
and review Mond's stratification for the $3$-jet space of Monge forms.

%In this section we review Mond's stratification of the $3$-jet space of Monge forms 
%in Chapter III of his Ph. D thesis \cite{mond1}.

%according to the types of germs of central projections 
%in Chapter III of his Ph. D thesis \cite{mond1}
%where he deals with central projections of surfaces in $\R^4$.

%In this section we stratify the jet space of Monge forms according to the types of germs of central projections.
%Note that similar results were given by Mond in Chapter III of his Ph. D thesis \cite{mond1}where he deals with central projections of surfaces in $\R^4$.

Take a surface $M$ in $\R^4\subset\R P^4$ with the Monge form $(z,w)=f(x,y)=(f_1(x,y),f_2(x,y))$.
 Let $V_{\ell}$ denote the space of polynomials in $x,y$ of degree greater than $1$ and less than or equal to $\ell$.  
Our aim here is to obtain a stratification of the $\ell$-jet space of Monge forms $V_{\ell} \times V_{\ell}$
which is induced from the $\A^\ell$-stratification of $J^\ell(2,3)$ obtained in \cite{mond1,mond} as follows.

%Here $\A^\ell$ means the group of $\ell$-jets of elements in $\A$ which acts on the $\ell$-jet space of map germs.
%Remark that our discussion for $\A$-equivalence is deduced to that for $\A^\ell$-equivalence,
%since all germs dealt with in the present paper are finitely $\A$-determined \cite{mond}.

Consider a point $p \in \mathbb{R} P^4$, which is sometimes called {\it a view point}, not lying on $M$ and define $\pi_p: \mathbb{R} P^4 -\{p\}\rightarrow \mathbb{R} P^3$ 
as the canonical projection which associates $x\in\R P^4-\{p\}$ to the line generated by $x-p$. 
{\it The central projection of the surface $M$ from $p$} is given by the composite map 
$$\varphi_{p,M}:=\pi_p \circ \iota: M \rightarrow \mathbb{R} P^3$$
(see also \cite{DeolindoKabataOhmotoSano}).

%\textcolor{red}{(more precisely?)}.

%There is 4-dimensional freedom of the choice of viewpoint $p$, there is naturally produced a
%4-parameter family of central projection, $M \times U \to \mathbb{R}P^3,$ where $U$ is any small open subset of
%the complement $\mathbb{R}^4 - M$.
%Therefore we might have expected that any germ in $\mathbb{R}^2\to\mathbb{R}^3$ of
%$A_e$-cod $\leq 4$  would appear in central projection generically. But it is not the case.

\begin{table}\label{gibsontable}
$$
\begin{array}{c|c | c }
\hline
 \mbox{Type}&\mbox{Normal form}  &\mbox{cod}\\
\hline
\hline
\mbox{hyperbolic}&(x^2,y^2)  &0\\
\mbox{elliptic}&(x^2-y^2,xy) &0\\
 \mbox{parabolic}&(x^2,xy) &1\\
 \mbox{inflection}&(x^2+ y^2,0)\;\;\mbox{or}\;\;(xy,0)&2\\
 \mbox{degenerate inflection}&(x^2,0)&3\\
 \mbox{degenerate inflection}&(0,0)&4\\
 \hline
\end{array}
$$
\caption{The classification of $J^2(2,2)$ (which is equal to the $2$-jet space of Monge forms $f=(f_1,f_2)$)  by $GL(2)\times GL(2)$-actions given by Gibson in \cite{gibson}.}\label{gibson}
\end{table}

We denote the central projection of the surface germ expressed in Monge form $f$ from a view point $p$ by $\varphi_{p,f}$.
We stratify $V_\ell\times V_\ell$ by the difference of $\A^\ell$-types of $j^\ell \varphi_{p,f}$ for a view point $p\in\R P^4-M$ and the $\ell$-jet of a Monge form $j^\ell f\in V_\ell\times V_\ell$.
$\A^\ell$ means the equivalence of jets of map germs, i.e., two jets $j^\ell g, j^\ell h\in J^\ell (2,3)$ are equivalent 
if and only if there exist jets of diffeomorphism germs $\sigma, \tau$ of the source and the target at the origins
such that $j^\ell h=j^\ell(\tau\circ g \circ \sigma^{-1})$. 
Remark that even for the same $\ell$-jet of the surface the central projection gives different $\A^\ell$-types depending on the view point.
For example, $j^\ell \varphi_{p,f}$ is always regular type (i.e. equivalent to $(x,y,0)$) if and only if $p$ is outside the tangent plane of the surface germ;
and gives a singularity if and only if $p$ is on the tangent plane to the surface.

We say that a line on the tangent plane which goes through the origin in $\R^4$ is an {\it asymptotic line of the surface given in Monge form $f$}  
if $\varphi_{p,f}$ is equivalent to one of singularities worse than a crosscap ($S_0$-type) for all view points $p$ on the line.   
%$S_0$-type (i.e. equivalent to $(x,xy,y^2)$) if and only if $p$ is outside the tangent plane of the surface germ 
 In Section 3 we show that the classification of $2$-jets of Monge forms by projective transformations coincides with the classification of $J^2(2,2)$ by $GL(2)\times GL(2)$-actions
 in Table \ref{gibson},
and each orbit is characterized by the number of asymptotic lines \cite{bruce-nogueira,mond}.
 For elliptic type with the form $j^2f=(x^2+y^2,xy)$, there are no asymptotic lines;
 for hyperbolic type with the form $j^2f=(x^2,y^2)$, $x$ and $y$-axis are two unique asymptotic lines;
 for parabolic type with the form $j^2f=(x^2,xy)$, $y$-axis is a unique asymptotic line;
 for inflection type with the form $j^2f=(x^2+y^2,0)$ or $(xy,0)$, 
 all lines on the tangent plane which go through the origin are asymptotic lines.
Thus we study types of central projections from view points on the asymptotic lines, and divide strata in Table \ref{gibsontable} into finer ones.

%get the finer strata \textcolor{red}{(i.e. proper subset)} of the stratification given in Table \ref{gibsontable}.  

 %the ways for singularities to appear in central projections of each type of Monge form is totally different:
 %For elliptic type with the form $j^2f=(x^2+y^2,xy)$, $j^2\varphi_{p,f}$ gives only $S_0$-type for all $p$ on the tangent plane;
 %For hyperbolic type with the form $j^2f=(x^2,y^2)$, $j^2\varphi_{p,f}$ is equivalent to $(x,y^2,0)$ for $p$ on $x$ and $y$-axis,
 %and to $S_0$-type for $p$ on the tangent plane but outside the above axes;
 %For parabolic type with the form $j^2f=(xy,x^2)$, $j^2\varphi_{p,f}$ is equivalent to $(x,xy,0)$ for $p$ on $y$-axis,
 %and to $S_0$-type for $p$ on the tangent plane but outside the above axis;
 %For inflection type with the form $j^2f=(xy^2+y^2,0)$ or $(xy,0)$, $j^2\varphi_{p,f}$ is equivalent to both $(x,y^2,0)$ and $(x,xy,0)$ for $p$ on some special lines (see Section 4 for the detail), and to $(x,y^2,0)$ for $p$ on the tangent plane but outside the above axes.

% central projections gives different types for each orbit of 2-jet of surfaces represented by normal forms in Table \ref{gibsontable}; for elliptic surfaces with the form $j^2f=(x^2+y^2,xy)$ $j^2\varphi_{p,f}$ gives 
 
 %\textcolor{red}{ The stratification for generic surfaces is enough if the codimension stratum is at most 2,}
% For generic surfaces the stratification is enough with the stratum of codimension at most $2$},
% which is verified by the following argument. 
 Define a smooth map, the Monge-Taylor map $\Theta:M  \rightarrow  V_{\ell}\times V_{\ell}$, 
 which associates to each point in $M$ the $\ell$-jet of Monge form $f=(f_1,f_2)$ at the point. The following is a natural extension of Bruce's Theorem in \cite{Bruce}.

\begin{theorem}\label{brucethm}  Let $Z \subset V_{\ell}\times V_{\ell}$ be an $GL(2)\times GL(2)$-invariant submanifold. For generic surface $M$ in $\mathbb{R}^4$, the Monge-Taylor map $\Theta: M \rightarrow V_{\ell}\times V_{\ell}$ is transverse to $Z$.
\end{theorem}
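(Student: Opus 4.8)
The plan is to reduce the claim to Thom's jet transversality theorem, following the argument Bruce gives for surfaces in $3$-space in \cite{Bruce}. Let $\mathcal{E}$ be the space of embeddings $g\colon M\hookrightarrow\mathbb{R}^4$ with the Whitney $C^\infty$ topology. For $g\in\mathcal{E}$ and $p\in M$, after choosing affine coordinates on $\mathbb{R}^4$ adapted to the point --- with $g(p)$ at the origin and $T_{g(p)}g(M)$ the $(x,y)$-plane --- one reads off the $\ell$-jet of the Monge form of $g(M)$ at $g(p)$; this is the Monge--Taylor map $\Theta_g\colon M\to V_\ell\times V_\ell$. The adapted coordinates are unique only up to the $GL(2)\times GL(2)$-action on the tangent and normal $2$-planes, so $\Theta_g$ is canonical only modulo a smooth $GL(2)\times GL(2)$-valued gauge; since $Z$ is $GL(2)\times GL(2)$-invariant the condition ``$\Theta_g$ transverse to $Z$'' is nevertheless well defined, and it is this condition we want for $g$ in a residual --- and ultimately an open everywhere dense --- subset of $\mathcal{E}$.

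The key point is that $\Theta_g(p)$ depends only on the $\ell$-jet $j^\ell g(p)$: passing from $j^\ell g(p)$ to the $\ell$-jet of $g(M)$ at $g(p)$ and then, after a fibrewise choice of adapted frame, to its Monge form gives a $V_\ell\times V_\ell$-valued map on the open submanifold $R\subset J^\ell(M,\mathbb{R}^4)$ of immersive jets which is a submersion and is well defined up to the $GL(2)\times GL(2)$ gauge. Because $Z$ is invariant, its preimage is an unambiguous $GL(2)\times GL(2)$-invariant submanifold $\widehat Z\subset R$, and $\Theta_g$ is transverse to $Z$ exactly when the section $j^\ell g\colon M\to J^\ell(M,\mathbb{R}^4)$ is transverse to $\widehat Z$. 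That the map above really is a submersion can be checked concretely: writing $g$ near $p$ in Monge form $f=(f_1,f_2)$ and perturbing to $f+t\,\rho\,(h_1,h_2)$ for $(h_1,h_2)\in V_\ell\times V_\ell$ and a bump function $\rho\equiv 1$ near $0$, one moves $\Theta_g(p)$ in the direction $(h_1,h_2)$ to first order while leaving the $1$-jet of $g$ at $p$ --- hence the embedding property and the adapted chart --- undisturbed, since the $h_i$ have no constant or linear terms.

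Thom's jet transversality theorem then gives that $\{\,g\in\mathcal{E} : j^\ell g \text{ transverse to } \widehat Z\,\}$ is residual; as $M$ is compact it is dense, and since transversality of $j^\ell g$ to a submanifold is an open condition --- if $\widehat Z$ is not closed, apply this to the finitely many $GL(2)\times GL(2)$-invariant strata of lower dimension into which $\overline{\widehat Z}\setminus\widehat Z$ decomposes --- the set is open as well, yielding the asserted open dense family. I expect the delicate part to be not the transversality step but the construction of $\widehat Z$: one has to make precise the fibrewise adapted frame along $R$, i.e. a reduction of the structure group of the relevant jet bundle, so that $\widehat Z$ is a bona fide submanifold and the equivalence ``$\Theta_g$ transverse to $Z$ $\Leftrightarrow$ $j^\ell g$ transverse to $\widehat Z$'' holds over all of $M$; the invariance of $Z$ is precisely what makes that possible.
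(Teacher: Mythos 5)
Your argument is essentially the proof the paper intends: the paper's proof of Theorem \ref{brucethm} is a one-line citation to Theorem 1 of \cite{Bruce} (see also \cite{r4surf}), whose argument is exactly your reduction of transversality of the Monge--Taylor map to jet transversality via polynomial perturbations $f+t\rho(h_1,h_2)$ of the Monge form, with the $GL(2)\times GL(2)$-invariance of $Z$ making the transversality condition independent of the choice of adapted frames. Two small remarks: to keep the gauge ambiguity down to $GL(2)\times GL(2)$ (rather than the larger parabolic group including shears) you should fix the normal complement, e.g.\ the orthogonal one, when choosing adapted coordinates; and since the theorem only asserts transversality for \emph{generic} $M$ (a residual set suffices here, openness being handled separately in Theorem \ref{mainthm} via the codimension-$\le 2$ stratification), your extra openness discussion is not needed.
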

\begin{proof} The proof  follows the same arguments in the proof of Theorem 1 in \cite{Bruce} (see also \cite{r4surf}).\qed
\end{proof}

Since the strata are induced from $\A^\ell$-types of central projections,
they are $GL(2)\times GL(2)$-invariant. By Theorem \ref{brucethm} we consider only strata with codimension at most $2$. 
%Thus the strata of degenerate inflection-types in Table \ref{gibsontable} are out of our consideration,
%In addition, we do not care of dividing the elliptic-stratum,
%because the surface germ of the elliptic type does not have asymptotic lines.
%in order to study generic surfaces from singularity theory via central projections,
%we should get the defining condition of $G_W$ for corresponding $W\subset J^\ell(2,3)$
%whose codimension is at most 2.
%and for inflection-types we only consider open proper subsets of them as finer strata since inflection-types already have codimension $2$.
In this section, we study the stratification of 3-jets of Monge forms which is
induced by the $\A^3$-orbits in \cite{mond1, mond, mond2} or their unions given in Table \ref{MondStrata}.

\begin{table}
$$
\begin{array}{c|c  }
\hline
 \mbox{Name}&\mbox{Normal form} \\
\hline
\hline
S_0 & (x,y^2,xy)  \\
S      & (x,y^2,y^3+x^2y) \mbox{ or $(x,y^2,y^3)$ }\\
B      &(x,y^2,x^2y) \mbox{ or $(x,y^2,0)$} \\
H      &(x,xy,y^3) \\
P     & (x,xy+y^3,xy^2) \\
R    & (x,xy,xy^2)\\
T     & (x,xy+y^3,0)\\
U     & (x,xy,0)\\
 \hline
\end{array}
$$
\caption{$\A^3$-orbits of germs $\R^2,0\to\R^3,0$ \cite{mond1, mond}.}\label{MondStrata}
\end{table}

%$S_0: (x,y^2,xy)$, $S: (x,y^2,y^3+x^2y)$ or $(x,y^2,y^3)$, $B: (x,y^2,x^2y)$ or $(x,y^2,0)$, \\
%$H: (x,xy,y^3)$, $P: (x,xy+y^3,xy^2)$, $R: (x,xy,xy^2)$, 
%$T: (x,xy+y^3,0)$.
%, $X: (x,y^3,x^2y+xy^2)$, $Y: (x,y^3-x^2y,xy^2)$.

%Observe that the finer stratification (including that of the inflection-type) by $\A$-orbits is obtained in Section 4 after getting good normal forms of 3-jets of surfaces via projective transformation in Section 3.

%In addition, here we do not consider stratum of elliptic type since the Monge form with elliptic type gives just cross-cap type, and of stratum of inflection type since this stratum already has codimension $2$ (The open proper strata of this type are studied in detail in Section 4).

Write
$$
f_1(x,y)=\sum_{i+j\geq2}a_{ij}x^{i}y^i,\;\;   f_2(x,y)=\sum_{i+j\geq2}b_{ij}x^{i}y^i.
$$
% and by the same discussion in subsection 2.1 of \cite{DeolindoKabataOhmotoSano}, 
Then $\varphi_{p,f}$ can be regarded as a map germ $\R^2,0\to\R^3$. Indeed, for $p=[a;b;c;d;1] \in \R^4-M$, 
%with $a^2+b^2+c^2+d^2\neq0$
we choose $a\neq0$, then
$\varphi_{p,f}$ is given by 
$$
\varphi_{p,f}(x,y)=\left(\frac{y-b}{x-a},\frac{f_1(x,y)-c}{x-a},\frac{f_2(x,y)-d}{x-a}\right).
$$
On the other hand, if $p$ is taken at infinity and written as $p=[a;b;c;d;0]$,   
$\varphi_{p,f}$ is given by
$$
\varphi_{p,f}(x,y)=(y-ux,f_1(x,y)-vx,f_2(x,y)-wx)
$$
with $(u,v,w)=(\frac ba,\frac ca,\frac da)$ (see also \cite{DeolindoKabataOhmotoSano}).

The following sums up the Propositions III. 2:2, 2:8, 2:14, 2:16 and 2:17 in \cite{mond1}. 
\begin{proposition}{\rm (Mond \cite{mond1})} \label{stratification}
 For a surface germ in Monge form $(z,w)=f(x,y)$ we have the following.
\begin{itemize}
%\item[(i)] elliptic type,  $\varphi_{p,f}(0)\sim_{\A} S_0$ for any $p\in T_qM$.
\item[(i)]   Suppose that $j^2f=(x^2,y^2)$. Then: %hyperbolic type with $j^3f=(x^2+\sum_{i+j=3}a_{ij}x^iy^j,y^2+\sum_{i+j=3}b_{ij}x^iy^j)$ for which $x$-axis and $y$-axis are asymptotic directions,
$$
\begin{array}{ccr}
j^3\varphi_{p,f}\sim S &\Leftrightarrow & \mbox{$a_{03}\neq0$ {\rm (}resp. $b_{30} \neq0${\rm )}}\\
j^3\varphi_{p,f}\sim B &\Leftrightarrow& \mbox{$a_{03}=0$ {\rm (}resp. $b_{30} =0$\rm{)}}
\end{array}
$$
for $p$ on the asymptotic line $x=0$ (resp. $y=0$).
\item[(ii)]  Suppose that $j^2f=(x^2,xy)$. Then: %parabolic type with $j^3f=(x^2+\sum_{i+j=3}a_{ij}x^iy^j,xy+\sum_{i+j=3}b_{ij}x^iy^j)$,
$$
\begin{array}{ccl}
j^3\varphi_{p,f}\sim H &  \Leftrightarrow & \mbox{ $a_{03}\neq0$ }\\
j^3\varphi_{p,f}\sim P &  \Leftrightarrow &\mbox{ $a_{03}=0$ and $a_{12},b_{03}\neq0$}\\
j^3\varphi_{p,f}\sim R &  \Leftrightarrow &\mbox{ $a_{03}=b_{03}=0$ and $a_{12}\neq0$}\\
j^3\varphi_{p,f}\sim T &  \Leftrightarrow &\mbox{ $a_{03}=a_{12}=0$ and $b_{03}\neq0$}\\
j^3\varphi_{p,f}\sim U & \Leftrightarrow &\mbox{ $a_{03}=a_{12}=b_{03}=0$}\\
\end{array}
$$
for $p$ on the unique asymptotic line $x=0$.
\item[(iii)]  Suppose that $j^2f=(x^2+y^2,0)$. Then: %parabolic type with $j^3f=(x^2+\sum_{i+j=3}a_{ij}x^iy^j,xy+\sum_{i+j=3}b_{ij}x^iy^j)$,
$$
\begin{array}{ccl}
j^3\varphi_{p,f}\sim S\; \mbox{or}\; B 
\end{array}
$$
for any $p$ on the $xy$-plane.
\item[(iv)]  Suppose that $j^2f=(xy,0)$. Then: %parabolic type with $j^3f=(x^2+\sum_{i+j=3}a_{ij}x^iy^j,xy+\sum_{i+j=3}b_{ij}x^iy^j)$,
$$
\begin{array}{ccl}
j^3\varphi_{p,f}\sim S, B\; \mbox{or}\; H&  \Leftrightarrow & \mbox{ $b_{30}\neq0$ }\\
j^3\varphi_{p,f}\sim S, B\; \mbox{or}\; P &  \Leftrightarrow &\mbox{ $b_{30}=0$ and $a_{30},b_{21}\neq0$}\\
j^3\varphi_{p,f}\sim S, B\; \mbox{or}\; R &  \Leftrightarrow &\mbox{ $a_{30}=b_{30}=0$ and $b_{21}\neq0$}\\
j^3\varphi_{p,f}\sim S, B\; \mbox{or}\; T &  \Leftrightarrow &\mbox{ $a_{30}=b_{21}=0$ and $b_{30}\neq0$}\\
j^3\varphi_{p,f}\sim S, B\; \mbox{or}\; U & \Leftrightarrow &\mbox{ $a_{30}=b_{30}=b_{21}=0$}\\
\end{array}
$$

for any $p$ on the $xy$-plane.

%\item[(iv)]  inflection type with $j^3f(0)=(x^2\pm y^2+\sum_{i+j=3}a_{ij}x^iy^j,\sum_{i+j=3}a_{ij}x^iy^j)$,
%$$
%\begin{array}{l}
%\textcolor{red}{j^3\varphi_{p,f}(0)\sim S_1, S, B \mbox{  or $H$  $\Leftrightarrow$ }}
%\end{array}
%$$
%for $p$ on an asymptotic directions.
\end{itemize}
\end{proposition}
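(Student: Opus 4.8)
The statement is Mond's \cite{mond1}; the plan for a self-contained argument is to reduce it to a finite list of explicit $\A^3$-jet computations, one for each fixed $2$-jet in (i)--(iv). For a given $2$-jet I would first read off the asymptotic line(s) from the discussion preceding the Proposition---the $x$- and $y$-axes for $(x^2,y^2)$, the $y$-axis for $(x^2,xy)$, and every line through the origin in the tangent plane for the two inflectional $2$-jets---and then, letting the view point $p$ run over such a line, write $\varphi_{p,f}$ using the two chart formulas displayed above: a finite point $p=[s;0;0;0;1]$ on $\{y=0\}$, a point at infinity, or the $x\leftrightarrow y$ mirror of the formula for a point on $\{x=0\}$. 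Computing $j^3\varphi_{p,f}$ and reducing it by the $\A^3$-action (the $3$-jets of source and target diffeomorphisms) to one of the eight normal forms of Table~\ref{MondStrata}, I would record which inequalities among the $a_{ij},b_{ij}$ are forced at each step; since every orbit in Table~\ref{MondStrata} is already distinguished by its $3$-jet, no finite-determinacy input beyond this is needed.

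The mechanics are uniform. In each case $\varphi_{p,f}$ has the shape $u\cdot(\ell,f_1,f_2)$ with $\ell$ linear and $u$ a unit, so a source change turning the first component into a coordinate brings the germ to corank-one form $(x,g,h)$, and one runs the standard reduction: normalise the quadratic part of $(g,h)$ to identify the ``frame''---$(x,y^2,0)$ in the $S/B$ case, $(x,xy,0)$ in the $H/P/R/T/U$ case---then clean the cubic part modulo functions of the already-normalised components and modulo source changes (including linear ones), and read off the orbit. For $(x^2,y^2)$ this produces $j^3\varphi_{p,f}\sim(x,y^2,y^3+x^2y)$, i.e.\ $S$, exactly when the relevant cubic coefficient of $f$ (namely $b_{30}$ when $p\in\{y=0\}$, and $a_{03}$ when $p\in\{x=0\}$) is nonzero, and $\sim B$ otherwise. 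For $(x^2,xy)$ the successive vanishing of $a_{03}$, then of $b_{03}$ and $a_{12}$, then of $b_{21}$, peels off $H$, then $P$ or $R$, then $T$ or $U$, precisely as tabulated. For the two inflectional $2$-jets I would instead verify that, whatever direction $p$ is chosen, the reduced $2$-jet is forced to be $(x,y^2,0)$ (always, for $(x^2+y^2,0)$) or forced to be $(x,y^2,0)$ or $(x,xy,0)$ (for $(xy,0)$); this pins $j^3\varphi_{p,f}$ down to the stated short lists, with the inequalities on $a_{30},b_{30},b_{21}$ recording exactly when the $(x,xy,0)$-frame can occur and which of $H,P,R,T,U$ then appears. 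A point worth checking along the way is that the view-point parameter ($s$, or $u,v,w$ at infinity) enters only through nonzero scalar multiples, so that it disappears from the $\A^3$-type---this is what makes the answer depend on the Monge coefficients alone.

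I expect the only real difficulty to be disciplined bookkeeping in the $\A^3$-reductions: pushing the coordinate changes just far enough to separate the finitely many orbits while making sure that the changes used to normalise the quadratic part do not quietly reintroduce cubic monomials already removed, and that the particular chart chosen for $p$ never obstructs a needed normalisation. The full computation is carried out in \cite{mond1}.
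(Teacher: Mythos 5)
Your strategy is essentially the paper's own proof: fix each $2$-jet, take view points (finite and at infinity) on the asymptotic lines in the given charts, reduce $j^3\varphi_{p,f}$ by $\A^3$-changes to a partial normal form, and read off the coefficient conditions against Mond's list in Table \ref{MondStrata}, deferring the detailed reduction to \cite{mond1} just as the paper does. One small caveat: your claim that the view-point parameter enters only through nonzero scalar multiples is not literally true (e.g.\ in case (i) with finite $p=(0,a,0,0)$ the reduced jet is $(x,\,a(a_{21}a-1)x^2y+a_{03}y^3,\,y^2)$, whose $x^2y$-coefficient can vanish for special $a$); this is harmless only because the $S$ and $B$ classes each contain both sub-normal forms, so the distinction is still governed by $a_{03}$ (resp.\ $b_{30}$) alone.
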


\proof
Statement (i).
It is easy to check that the $x$ and $y$-axes are asymptotic lines at the origin for $M$ in Monge-form with $j^2f=(x^2,y^2)$.
Suppose $p$ is at the $y$-axis and written as $p=(0,a,0,0)$,
then, by coordinate changes, we get
$$
j^3 {\varphi}_{p,f}\sim_{\A^3}(x,a(a_{21}a-1)x^2y+a_{03}y^3,y^2).
$$
If $a_{03}\neq0$, $j^3\varphi_{p,f}(0)$ is of $S$-type, otherwise it is of $B$-type.
%More precisely, the value $a$ determines the finer strata, i.e., if $a_{21}a-1=0$, $j^3\bar{\varphi}_{p,f}(0)$ is $S$ or $C$,
%unless $j^3f(0)$ is $S_1$ or $B$. 
If $p$ is at infinity on the $y$-axis, we obtain
$$
j^3 {\varphi}_{p,f}\sim_{\A^3}(x,xy,a_{21}x^2y+a_{03}y^3).
$$
Again, if $a_{03}\neq0$, $j^3\varphi_{p,f}(0)$ is of $S$-type, otherwise it is of $B$-type. %where the terms $a_{03}$ determine the same type described as the above with the same condition.
By exchanging $x$ and $y$ (also $a_{ij}$ and $b_{ij}$), the case of $p$ on the $x$-axis is follows similarly.   

Statement (ii). Remark that $y$-axis is a unique asymptotic line at the origin when $j^2f(0)=(x^2,xy)$,
hence we put $p=(0,a,0,0)\in\R^4\subset \R P^4$ ($a\neq0$) and we get
$$
j^3 \varphi_{p,f}\sim_{\A^3}(x,a_{12}axy^2+a_{03}y^3,xy+b_{03}y^3).
$$
Statement (ii) for $p\in\R^4$ naturally follows, and the case $p$ at infinity is similar.

Statement (iii).
Let view point $p=(a,b,0,0)\in\R^4$ and $a\neq0$, then we get 
$$
j^3 \varphi_{p,f}\sim_{\A^3}(x,  y^2,\xi_1x^2y+\xi_2y^3)
$$
where $\xi_1$ and $\xi_2$ are homogeneous polynomials of degree $3$ with variables $a$ and $b$ whose coefficients consist of $a_{ij}$ and $b_{ij}$.
Statement (iii) for $p=(a,b,0,0)\in\R^4$ where $a\neq0$ naturally follows, and it is easily seen that the projection gives just $S_1$-type for view points $p=(0,b,0,0)\in\R^4$ where $b\neq0$.
The case $p$ at infinity is similar.

Statement (iv).
For view point $p=(a,b,0,0)\in\R^4$ where $a, b\neq0$, it is easily seen that 
$
j^3 \varphi_{p,f}\sim S\; \mbox{or}\; B
$
in the similar way to the above.
Put $p=(a,0,0,0)\in\R^4$ where $a\neq0$ and we get 
$$
j^3 \varphi_{p,f}\sim_{\A^3}(x, xy-\frac{a_{30}}{a}y^3,b_{21}xy^2-\frac{b_{30}}{a}y^3).
$$
Statement (iv) for $p\in\R^4$ naturally follows, and the case $p$ at infinity is similar.
\qed
%where the term $a_{03}$ determines the same type described as the above,
%and the degenerated type $S$ or $C$ happens when $a_{21}$=0.

Based on Proposition \ref{stratification}, we stratify the $3$-jet space of Monge forms into strata with codimension at most $2$ as in Table \ref{strattable}.
In the next section we give simple normal forms of $4$-jets of Monge forms which represent each stratum in Table \ref{strattable} by projective transformations.

\begin{table}[h]
\begin{tabular}{c|c|c|c|c}
\hline
 Name &  Type of $2$-jet & Condition & cod &Proj.\\
%\hline
%  $(x^2,y^2)$      & $-$ & $2$ & $0$ \\
%  $(xy,x^2-y^2)$   & $-$ & $2$ & $0$ \\
%  $(x^2,xy)$       & $-$ & $2$ & $1$ \\
%  $(x^2\pm y^2,0)$ & $-$ & $2$ & $2$ \\
  \hline
  \hline
  $\Pi_{E}$        &  $(x^2-y^2,xy)$ & $-$         &  $0$ &$-$   \\
 \hline
  $\Pi_{S}$        & $(x^2,y^2)$& $a_{03}\cdot b_{30}\neq0$         &  $0$ &$S$   \\
  %\hline
  $\Pi_{B}$     & & $b_{30}=0,\; a_{03}\neq0$                &  $1$ & $S, B$ \\
  %\hline
  $\Pi_{2B}$   &        & $a_{03}=b_{30}=0$                 &  $2$ & $B$\\
  \hline
 $\Pi_{H}$      &   $(x^2,xy)$      & $a_{03}\neq0$               &$1$  & $H$\\
 %                  \hline
 $\Pi_{P}$      &    & $a_{03}=0,\; a_{12}\cdot b_{03}\neq0$ & $2$ &$P$\\
\hline
$\Pi^+_{I} $ & $(x^2+y^2,0)$ & $-$ &$2$ & $S,B$ \\
$\Pi^-_{I} $ & $(xy,0)$ & $b_{30}\neq0$   &$2$ & $S, B, H$\\
\hline
  \end{tabular}
\caption {Strata of codimension $ \leq 2$ in the space of
$3$-jets of Monge forms corresponding to $\A^3$-types of central projections from view points on asymptotic lines. 
Surface germs of $\Pi_E$-type do not have asymptotic lines.  
 %The last column describes the types of central projections of each surface (see Section 2 especially for $\dagger$ and $\dagger\dagger$).
%Refer to Section 4 about singularities of $\dagger$ and $\dagger\dagger$.
}\label{strattable}
\end{table}
\begin{remark}{\rm
%\textcolor{red}{(The set $\A^k$ is the set of $k$-jets of elements of $\mathcal A$.)}
By taking higher order terms of the  simple normal forms in the Table \ref{bigstrata1},
we can consider a finer stratification of the space of Monge forms which corresponds to $\A$-types of central projections as Mond did in \cite{mond1}.
For instance, we take the Monge form of the $\Pi_{S}$-type in Table \ref{bigstrata1} and write
$f=(x^2+y^3+\sum_{i+j\ge4}a_{ij}x^iy^j,y^2+\alpha x^3+\sum_{i+j\ge4}b_{ij}x^iy^j)$ with $\alpha, a_{ij}, b_{ij} \in\R$, $\alpha\neq0$ and $a_{40}=b_{04}=0$.
Then the condition $a_{31}, b_{13}\neq0$ determines the proper stratum of the $\Pi_{S}$-stratum where 
the $\A$-types of the central projection can be determined as the regular, crosscap, $S_1$ or $S_2$-type 
depending on the position of the view points (See \cite{mond1}).}
\end{remark}

\section{The classification of Monge forms by projective transformations and proof of Theorem \ref{mainthm}}

%We can write a matrix $\mathcal{M}$ of action $G(5)$ like
%$$\mathcal{M}=\left(
%    \begin{array}{ccccc}
%      q_{11} & q_{12} & q_{13} & q_{14} & 0 \\
%      q_{21} & q_{22} & q_{23} & q_{24} & 0 \\
%      0 & 0 & q_{33} & q_{34} & 0 \\
%      0 & 0 & q_{43} & q_{44} & 0 \\
%      \alpha & \beta & \gamma & \delta & 1 \\
%    \end{array}
%  \right)=\left(
%    \begin{array}{ccc}
%     \mathcal{A}_{2\times 2}  & \mathcal{C}_{2\times 2}  &  0 \\
%     0_{2\times 2}  & \mathcal{B}_{2\times 2}  &  0 \\
%     \Upsilon_1 & \Upsilon_2 & 1 \\
%        \end{array}
%  \right)
%$$
%that represent the projective transformations

In this section we consider a classification of jets of Monge-forms of generic surfaces by projective transformations based on the stratification in Table \ref{strattable}.
 The projective linear group $PGL(5)$ is defined as the quotient space $GL(5)/\sim$, where $A\sim A' $
if  $\exists \lambda \in\mathbb{R}$  such that  $A=\lambda  A'$. 
%\textcolor{red}{Two surface germs in $\R^4\subset\R P^4$ are said to be projectively equivalent if there exists a projective transformation (an element of $PGL(5)$)  which maps one to the other.}
To consider the action on $V_{\ell}\times V_\ell$ ($\ell$-jet-spaces of Monge-forms),  
we define the following subgroup 
$$
G(5):=\{\Psi \in PGL(5) \;|\; \Psi(0)=0, \;\; \Psi(W)=W\}
$$
of $PGL(5)$, where $0=[0;0;0;0;1]$ is the origin and $W$ is the $xy$-plane in $\mathbb{R}^4$.
Thus $G(5)$ form a $16$-dimensional subgroup of $PGL(5)$ and acts on $V_{\ell}\times V_{\ell}$.

Let $f=(f_1,f_2)$ and $g=(g_1,g_2)$ be Monge forms of surface-germs at the origin.
We say that the $k$-jets of these Monge forms are {\it projectively equivalent} and write $j^kf\sim j^kg$
if there exists $\Psi\in G(5)$ which transforms one to the other.
Remark that $\A^\ell$-types of central projections of jets of smooth surfaces 
are invariant under projective transformations of surfaces,
that is,
$j^\ell \varphi_{p,f}\sim_{\A^\ell} j^\ell \varphi_{\Phi(p),\Phi(f)}$ from view point $p$ and $\Phi\in G(5)$.

In this paper we check the equivalence of jets of Monge forms in the following way.
With the coordinate $(x,y,z,w)$ of $\R^4$,
a projective transformation $\Psi\in G(5)$ is  regarded locally as a diffeomorphism germ $\R^4,0\to\R^4,0$ given by
$$
\Psi(x,y,z,w)=\left(\frac{q_1(x,y,z,w)}{p(x,y,z,w)},\frac{q_2(x,y,z,w)}{p(x,y,z,w)},\frac{q_3(x,y,z,w)}{p(x,y,z,w)},\frac{q_4(x,y,z,w)}{p(x,y,z,w)}\right),
$$ 
where $q_{i}=q_{i1}x+ q_{i2}y + q_{i3}z + q_{i4}w$, for $i=1,2$, $q_{j}= q_{j3}z + q_{j4}w$, for $j=3,4$ and $p= 1+p_1 x+ p_2 y+ p_3 z + p_4 w$.
 Define
$$\begin{array}{l}
F_1(x,y,z,w)=\frac{q_3}{p}-f_1(\frac{q_1}{p},\frac{q_2}{p})\\
F_2(x,y,z,w)=\frac{q_4}{p}-f_2(\frac{q_1}{p},\frac{q_2}{p}).
 \end{array}
$$
Then
$$
F_1(x,y,g_1,g_2)=F_2(x,y,g_1,g_2)=o(k)
$$
where $o$ is Landau's symbol implies $j^kf\sim j^kg$. 
%If we write
%$$
%f_1=\sum_{i+j\geq2}a_{ij}x^{i}y^i \mbox{ and $\displaystyle  f_2=\sum_{i+j\geq2}b_{ij}x^{i}y^i$},
%$$

Hence, to check the equivalence, we have to solve algebraic equations $F_1=F_2=o(k)$ in terms of $q_is$ and $p_is$ 
for a given Monge form $f=(f_1,f_2)$ and some simplified normal form $g=(g_1,g_2)$.
Recall that we already have a stratification of the $3$-jet space of Monge forms induced from the $\A^3$-stratification as in Table \ref{strattable},
hence our task  is to find a simple normal form of each stratum by projective equivalence. 
We begin with simplifying $2$-jets of Monge forms and then deal with higher jets.
However we stop this process with the $4$-jets,
since the dimension of $G(5)$ which act on the jet space of Monge forms is just $16$
and it does not give so good normal forms for higher jets.

%$a_{ij}$ and $b_{ij}$ for a given Monge form $f=(f_1,f_2)$ and some simplified normal form $g=(g_1,g_2)$.

%$j^2(f_1,f_2)=(\sum_{i+j=2}a_{ij}x^iy^j,\sum_{i+j=2}b_{ij}x^iy^j)$

\subsection{$2$-jet}
We first deal with the classification of $2$-jets of Monge-forms.
In the $2$-jet space, the condition $F_1=F_2=o(2)$ for any $j^2f, j^2g \in V_2\times V_2$ gives equations of just $q_{i1},q_{i2},q_{j3},$ $q_{j4}$ with $i=1,2$ and $j=3,4$,
and the classification by projective transformations is reduced to the classification of $V_2\times V_2\subset J^2(2,2)$ by the natural action of
$\mathcal{G}=GL(2,\mathbb{R})\times GL(2,\mathbb{R}).$
The $\mathcal{G}$-orbits are classified in \cite{gibson} described as Table \ref{gibson}.
%\textcolor{red}{Hence, the assumption of Section 2 is verified. } 
We classify now the higher jets of germs with a $2$-jet as in Table \ref{gibson}.

%Write $j^2(f_1,f_2)=(\sum_{i+j=2}a_{ij}x^iy^j,\sum_{i+j=2}b_{ij}x^iy^j)$ where $a_{ij}, b_{ij} \in\R$. Since the $2$-jet depends only on the coefficients $q_{i1},q_{i2},q_{j3},$ $q_{j4}$ with $i=1,2$ and $j=3,4$, the $G(5)$ became, for this case, the group regarded as $$\mathcal{G}=GL(2,\mathbb{R})\times GL(2,\mathbb{R})$$ that acts on the space $J^2(2,2)$. The $\mathcal{G}$-orbits are classified in \cite{gibson}, described as \textcolor{red}{Table \ref{gibson}.}
%Hence, the assumption of Section 2 is verified, and we classify the higher jets of germs with these $2$-jet.

%codimension in the Table \ref{gibson} .
\subsection{Elliptic case}
Suppose that $j^2f=(x^2-y^2,xy)$ and write 
$$
j^3(f_1,f_2)=(x^2-y^2+\sum_{i+j=3}a_{ij}x^iy^j,xy+\sum_{i+j=3}b_{ij}x^iy^j)
$$
 where $a_{ij}, b_{ij} \in\R$.
The following equivalence
$$
j^3(f_1,f_2)\sim(x^2-y^2+y^2\phi_1,xy),
$$
is given by projective transformation $\Phi$ with
$$
\begin{array}{c}
q_1=x+b_{03}z+(-a_{31}+b_{12}-b_{30})w,\;\; q_2= y-b_{30}z+(-b_{21}+b_{03}+a_{30})w,\\
q_3 = z,\;\; q_4 = w,\;\; p =1+(a_{30}+2b_{03})x+(2b_{12}-a_{21})y.
\end{array}    
$$
Here $\phi_k$ means homogeneous polynomials of degree $k$.
Consider 
$$
j^4(f_1,f_2)=(x^2-y^2+y^2\phi_1+\sum_{ i+j=4}c_{ij}x^iy^j,xy+\sum_{i+j=4}d_{ij}x^iy^j)
$$ 
where $c_{ij}, d_{ij} \in\R$, then
$$
j^4(f_1,f_2)\sim(x^2-y^2+y^2(\phi_1+\phi_2),xy+\phi_4),
$$ 
by $\Phi$ with
$
q_1=x,\;\;q_2=y,\;\;q_{3}=z,\;\;q_4=w,\;\;p=1+c_{40}z+c_{31}w.
$

\subsection{Hyperbolic case}
Suppose that $j^2f=(x^2,y^2)$ and write 
$$
j^3(f_1,f_2)=(x^2+\sum_{i+j=3}a_{ij}x^iy^j,y^2+\sum_{i+j=3}b_{ij}x^iy^j)
$$
 where $a_{ij}, b_{ij} \in\R$.
The following equivalence
$$
j^3(f_1,f_2)\sim(x^2+a_{03}y^3,y^2+b_{30}  x^3)
$$
is given by projective transformation $\Phi$ with
$$
\begin{array}{c}
q_1=x+\frac{1}{2}(-a_{30}+b_{12})z -\frac{1}{2}a_{12}w, \;\; q_2=y -\frac{1}{2}b_{21}z+\frac{1}{2}(a_{21}-b_{03})w,\\
       q_{3}=z, \;\;q_4=w,\;\; p=1+ b_{12}x +a_{21}y.
\end{array}       
$$

%\begin{remark}\label{hyperem}  \emph{When we consider the finer stratification of the jet space of Monge forms induced by $\A$-classification, especially for strata in $\Pi_B$ and $\Pi_{2B}$, it is better to get the normal form for $5$-jet.
%\begin{remark}{\normalfont
 We can eliminate more two coefficients in $4$-jet.
Put 
$$
j^4(f_1,f_2)=(x^2+a_{03}y^3+\sum_{ i+j=4}c_{ij}x^iy^j,y^2+b_{30}  x^3+\sum_{i+j=4}d_{ij}x^iy^j)
$$ 
where $c_{ij}, d_{ij} \in\R$, then
$$
j^4(f_1,f_2)\sim(x^2+a_{03}y^3+ y\phi_3,y^2+b_{30}x^3 + x\psi_3),
$$ 
by $\Phi$ with
$
q_1=x,\;\;q_2=y,\;\;q_{3}=z,\;\;q_4=w,\;\;p=1-c_{40}z -d_{04}w.
$
Here $\phi_3$ and $\psi_3$ means homogeneous polynomials of degree $3$.
Then
$$
j^4(f_1,f_2)\sim\left\{\begin{array}{ll}
(x^2+y^3+y\phi_3, y^2+\alpha x^3+x\psi_3), \;\alpha\in\R^* & \mbox{if $a_{03}, b_{30}\neq0$;}\\
(x^2+y^3+y\phi_3, y^2+x\psi_3)  & \mbox{if $a_{03}\neq0$ and  $b_{30}=0$;}\\
(x^2+y\phi_3, y^2+x\psi_3)  & \mbox{if $a_{03}=b_{30}=0$.}\\
\end{array}\right.
$$

\subsection{Parabolic case}
Suppose that $j^2f=(x^2,xy)$ and write 
$$
j^3(f_1,f_2)=(x^2+\sum_{i+j=3}a_{ij}x^iy^j,xy+\sum_{i+j=3}b_{ij}x^iy^j)
$$ 
where $a_{ij}, b_{ij} \in\R$.
It is easy to show that
$$
j^3(f_1,f_2)\sim(x^2+a_{12}xy^2 +a_{03}y^3, xy+\bar{b}_{12}xy^2+b_{03}y^3)
$$
where $\bar{b}_{12}=b_{12}-\frac12 a_{21}$.
If $a_{03}\neq0$, then
$$
j^3(f_1,f_2)\sim(x^2+(a_{12}+3b_{03})xy^2+a_{03}y^3,xy)
$$
with the equivalence given by $\Psi$ with 
$$
\begin{array}{c}
q_1=x-\frac{(-\bar{b}_{12}a_{03}+3a_{12}b_{03}+3b_{03}^2)}{a_{03}}w,\;\\
q_2=\frac{b_{03}}{a_{03}}x+ y+ \frac{b_{03}^2(a_{12}b_{03}-a_{03}\bar{b}_{12})}{a_{03}^3}z -\frac{b_{03}(2b_{03}^2+\bar{b}_{12}a_{03})}{a_{03}^2}w,\;\;q_{3}=z,\\
q_4=\frac{b_{03}}{a_{03}}z+w,\;\;p=1 +\frac{b_{03}^2(a_{12}+b_{03})}{a_{03}^2} x-\frac{(-2\bar{b}_{12}a_{03}+4a_{12}b_{03}+3b_{03}^2)}{a_{03}} y.
\end{array}
$$
Then the $4$-jet can be written in the form 
$$
j^4(f_1,f_2)\sim(x^2+\beta xy^2+ y^3+y\phi_3,xy+x\psi_3),
$$
where $\beta=\frac{(a_{12}+3b_{03})}{a_{03}^{3/2}}$, $\phi_3$ and $\psi_3$ mean homogeneous polynomials of degree $3$.
%If $d=0$ we have by another projective transformation $$j^5(f_1,f_2)\sim(x^2+x^2\phi_1+y^3+x\psi_3+x\psi_4,xy+\phi_3+x\phi'_3+x\phi_3)$$

If $a_{03}=0$ but $a_{12}\neq0$ we obtain
$$
j^3(f_1,f_2)\sim(x^2+xy^2,xy+\gamma  y^3)
$$
with the projective transformation $\Psi$  given by
$$
\begin{array}{c}
q_1=a_{12}x +a_{12}b_{12}w, \;\;q_2=y,\;\; q_{3}=a_{12}^2z, \\
q_4=a_{12}w,\;\; p=1+2b_{12}y, 
\end{array}
$$
where $\gamma=\frac{b_{03}}{a_{12}}$.
If we put 
$$
j^4(f_1,f_2)=(x^2+xy^2+\sum_{i+j=4}a_{ij}x^iy^j,xy+\beta  y^3+\sum_{i+j=4}b_{ij}x^iy^j),
$$
then $\gamma\neq0$ leads to  
 $$
 j^4(f_1,f_2)\sim(x^2+xy^2+\lambda y^4,xy+\gamma  y^3+\phi_4)
 $$
 by a projective transformation $\Psi$ with 
  $$
  \begin{array}{c}
  q_1=x+\frac{1}{2}(-q_{21}^2+p_1)z+(3\gamma q_{21}-3q_{21})w,\\ 
  q_2=y+q_{21}x+\frac{1}{2}(-2\gamma q_{21}^3+q_{21}^3+p_1 q_{21})z+\frac{1}{2}(-q_{21}^2+p_1)w,\\ 
  q_{3}=z, \;\;q_4=q_{21}z+w,\;\;
  p=1+p_1 x - (-6\gamma q_{21}-4q_{21})y+p_3 z+p_4 w,
  \end{array}
  $$
   where $p_1=\frac{1}{\Lambda^2}\xi_1$, $p_3=\frac{1}{\Lambda^4}\xi_2$, $p_4=\frac{1}{\Lambda^3}\xi_3$, $q_{21}=-\frac{a_{13}}{\Lambda}$, $\xi_i$ are combinations of the coefficients of 4-jet and $\Lambda=6\gamma^2+4\lambda-15\gamma+5\neq0$. If $\Lambda=0$ the terms of $j^4(f^1,f^2)$ of order $4$ can not be removed.  The $\phi_4$  is a homogeneous polynomials of degree $4$.

\subsection{Inflection case}
Suppose that $j^2f=(x^2+y^2,0)$ and write 
$$
j^3(f_1,f_2)=(x^2+y^2+\sum_{i+j=3}a_{ij}x^iy^j,\sum_{i+j=3}b_{ij}x^iy^j).
$$
Let $b_{30}-b_{12}\neq0$. It follows that
$$
j^3(f_1,f_2)\sim(x^2+y^2+k_1x^2y,\phi_3)
$$
 by $\Psi$ with 
 $$
 \begin{array}{c}
 q_1=x, \;\;q_2=y, \;\;q_{3}=z+(a_{30}-a_{12})w, \;\;q_4=(b_{30}-b_{12})w,\\ 
 p=1-\frac{(a_{30}b_{12}-a_{12}b_{30})}{(b_{30}-b_{12})}x-\frac{(a_{30}b_{03}-a_{12}b_{03}-a_{03}b_{30}+a_{03}b_{12})}{(b_{30}-b_{12})}y. 
\end{array}
$$
Here, $k_1$ is scalar  constant. Now, we take
 $$
j^4(f_1,f_2)=(x^2+y^2+k_1x^2y+\sum_{i+j=4}c_{ij}x^iy^j,\phi_3+\sum_{i+j=4}d_{ij}x^iy^j),
$$
where $c_{ij},d_{ij} \in \R$, then it follows that
$$
j^4(f_1,f_2)\sim(x^2+y^2+k_1x^2y+y\psi_3,\phi_3+\phi_4)
$$
 by $\Psi$ with $q_1=x, \;\;q_2=y, \;\;q_{3}=z, \;\;q_4=w, p=1+c_{40}z.$ Here $\phi_k$ and $\psi_k$ means homogeneous polynomials of degree $k$.

 Next, suppose 
 $$
 j^3(f_1,f_2)=(xy+\sum_{i+j=3}a_{ij}x^iy^j,\sum_{i+j=3}b_{ij}x^iy^j).
 $$
If $b_{03}\neq0$, then
$$
j^3(f_1,f_2)\sim(xy+k_2x^3,\phi_3)
$$
 by $\Psi$ with 
 $$
 \begin{array}{c}
 q_1=x, \;\;q_2=y, \;\;q_{3}=z+(a_{21}+a_{03})w, \;\;q_4=b_{03}w,\\ 
 p=1-\frac{(a_{21}b_{03}-a_{03}b_{21})}{b_{03}}x-\frac{(a_{21}b_{03}-a_{03}b_{12})}{b_{03}}y.
\end{array}
$$
 The $k_2$ is a scalar constants. Finally, we consider
 $$
j^4(f_1,f_2)=(xy+k_2x^3+\sum_{i+j=4}c_{ij}x^iy^j,\phi_3+\sum_{i+j=4}d_{ij}x^iy^j),
$$
where $c_{ij},d_{ij} \in \R$.  Thus, it follows that
$$
j^4(f_1,f_2)\sim(xy+k_2x^3+\bar{\xi}_4,\phi_3+\phi_4)
$$
 by $\Psi$ with $q_1=x,\;q_2=y,\;q_{3}=z,\;q_4=w, \;p=1+c_{22}z.$
The $\phi_k$ means homogeneous polynomials of degree $k$ and $\bar{\xi}_4$ is a homogeneous polynomials of degree $4$ without the term $x^2y^2$. %These normal forms complete the classification of Table \ref{bigstrata1}.

\end{document}